\newtheorem{theorem}{Theorem}
\newtheorem{lemma}{Lemma}
\DeclareMathOperator{\g}{\mathfrak{g}}
\DeclareMathOperator{\B}{\mathfrak{B}}
\title{Rota--Baxter operators on compact simple Lie groups and algebras}
\author{Saveliy V. Skresanov}
\date{}
\begin{document}
\maketitle

\begin{abstract}
	A Rota--Baxter operator on a Lie group $ G $ is a smooth map $ B : G \to G $ such that $ B(g)B(h) = B(gB(g)hB(g)^{-1}) $ for all $ g, h \in G $.
	This concept was introduced in 2021 by Guo, Lang and Sheng as a Lie group analogue of Rota--Baxter operators of weight 1 on Lie algebras.
	We show that the only Rota--Baxter operators on compact simple Lie groups are the trivial map and the inverse map.
	A similar description for Rota--Baxter operators of weight 1 on compact simple Lie algebras is provided.
\end{abstract}

\section{Introduction}

A \emph{Lie group} is a group which is also a finite-dimensional real smooth manifold,
such that group multiplication and inversion are smooth maps. We consider only real Lie groups and algebras in this paper.

In 2021 Guo, Lang and Sheng~\cite{gls} introduced the concept of a Rota--Baxter operator on a Lie group,
being motivated by some connections to the factorization theorem of Semenov--Tian--Shansky~\cite{sts}.
Namely, given a Lie group \( G \), a smooth map \( B : G \to G \) is a \textit{Rota--Baxter operator} on \( G \), if
for all \( g, h \in G \) we have
\[ B(g)B(h) = B(gB(g)hB(g)^{-1}). \]
Such operators are closely related to the Rota--Baxter operators on Lie algebras, which find applications
in the study of the Yang--Baxter equation, see~\cite{sts} and~\cite[\S 1.2]{gls} for more details,
and~\cite{bard1, bard2} for some properties of Rota--Baxter operators on abstract groups.

To make the connection between Rota--Baxter operators on Lie groups and algebras more precise,
let \( \g \) be a Lie algebra and let \( \B : \g \to \g \) be a linear operator.
We say that \( \B \) is a \emph{Rota--Baxter operator of weight~1} on \( \g \), if for all \( u, v \in \g \) we have
\[ [\B(u), \B(v)] = \B([\B(u), v] + [u, \B(v)] + [u, v]), \]
where \( [\cdot, \cdot] \) denotes the Lie bracket on~\( \g \).
If \( \g \) is the Lie algebra associated with a Lie group \( G \) and \( B : G \to G \) is a Rota--Baxter operator on \( G \),
then the tangent map at the identity \( T_1 B : \g \to \g \) is a Rota--Baxter operator of weight~1 on the Lie algebra \( \g \), see~\cite[Theorem~2.9]{gls}.

It is an interesting question which Rota--Baxter operators on the Lie algebra \( \g \) can be integrated
to the Lie group \( G \), i.e.\ whether for a given Rota--Baxter operator \( \B : \g \to \g \) there exists a Rota--Baxter operator \( B : G \to G \)
with \( \B = T_1 B \). It was shown in~\cite[Theorem~5.3]{local} that this can always be done locally (in some neighborhood of the identity in \( G \)),
and by~\cite[Corollary~2.19]{sheng}, it can be done globally if \( G \) is connected, simply connected and compact.
Moreover,~\cite[Example~4.3]{sheng} gives an example of a Lie algebra \( \g \) and an associated noncompact Lie group \( G \),
such that some Rota--Baxter operator on \( \g \) cannot be integrated to~\( G \).

The main result of this paper gives the full description of Rota--Baxter operators on compact simple Lie groups.
Recall that a Lie group is \emph{simple}, if it is connected, nonabelian and has no closed connected normal subgroups besides the whole group or the identity.
It can be easily checked that the trivial map \( B(g) = 1 \), \( g \in G \), which we denote by \( 1_G \),
and the inverse map \( B(g) = g^{-1} \), \( g \in G \), which we denote by \( (\cdot)^{-1} \), are Rota--Baxter operators on any Lie group~\( G \).
It turns out that these are the only Rota--Baxter operators on compact simple Lie groups.

\begin{theorem}\label{comp}
	The only Rota--Baxter operators on a compact simple Lie group \( G \) are \( 1_G \) and \( (\cdot)^{-1} \).
\end{theorem}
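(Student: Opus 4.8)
The plan is to reduce Theorem~\ref{comp} to a statement about subalgebras of $\g\oplus\g$, where $\g$ denotes the Lie algebra of $G$.

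Let $B$ be a Rota--Baxter operator on $G$. Setting $h=1$ in the defining identity gives $B(1)=1$, and one checks directly that $g\circ h:=gB(g)hB(g)^{-1}$ is a group operation on the set $G$; since $B$ is smooth, this makes $(G,\circ)$ into a Lie group $G_\circ$ on the same underlying manifold, in particular compact, connected, and with $\pi_1(G_\circ)=\pi_1(G)$ finite. The Rota--Baxter identity says exactly that $B\colon G_\circ\to G$ is a homomorphism, and a short computation shows that $\phi\colon G_\circ\to G$, $\phi(g):=gB(g)$, is a homomorphism as well, with $\ker B\cap\ker\phi=\{1\}$. Taking tangent maps at the identity, $\mathcal B:=T_1 B$ is a Rota--Baxter operator of weight~$1$ on $\g$, $T_1\phi=\mathcal B+\mathrm{id}$, and both $\mathcal B$ and $\mathcal B+\mathrm{id}$ are Lie algebra homomorphisms $\g_\circ\to\g$, where $\g_\circ:=(\g,[\cdot,\cdot]_{\mathcal B})$ with $[u,v]_{\mathcal B}=[\mathcal Bu,v]+[u,\mathcal Bv]+[u,v]$. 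Since $\g_\circ=\mathrm{Lie}(G_\circ)$ is of compact type with finite fundamental group, $\g_\circ$ is compact semisimple.

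Next I would show that $\mathcal B\in\{0,-\mathrm{id}\}$, and then conclude. Put $\mathfrak d:=\{\,((\mathcal B+\mathrm{id})u,\mathcal Bu):u\in\g_\circ\,\}\subseteq\g\oplus\g$. As $\mathcal B$ and $\mathcal B+\mathrm{id}$ are homomorphisms, $\mathfrak d$ is a subalgebra isomorphic to $\g_\circ$, of dimension $\dim\g$, and $\mathfrak d\cap\Delta(\g)=0$ since $(\mathcal B+\mathrm{id})u=\mathcal Bu$ forces $u=0$; hence $\g\oplus\g=\mathfrak d\oplus\Delta(\g)$ as vector spaces. (Conversely, every subalgebra of $\g\oplus\g$ complementary to the diagonal $\Delta(\g)$ arises in this way from a weight-$1$ Rota--Baxter operator.) I claim the only subalgebras of $\g\oplus\g$ complementary to $\Delta(\g)$ are $\g\oplus0$ and $0\oplus\g$; granting this, $\mathfrak d=\g\oplus0$ gives $\mathcal B=0$ and $\mathfrak d=0\oplus\g$ gives $\mathcal B=-\mathrm{id}$. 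Finally, if $\mathcal B=0$ then $B\colon G_\circ\to G$ is a homomorphism with vanishing differential at $1$, hence (after translation) with vanishing differential everywhere, so $B$ is locally constant, hence constant equal to $B(1)=1$, i.e.\ $B=1_G$; if $\mathcal B=-\mathrm{id}$ then $T_1\phi=0$, so by the same argument $\phi\equiv1$, that is $gB(g)=1$ for all $g$, i.e.\ $B=(\cdot)^{-1}$. The analogous description of weight-$1$ Rota--Baxter operators on compact simple Lie algebras is then immediate from the claim (or from Theorem~\ref{comp} via the integration result of~\cite{sheng}).

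There remains the claim. Using that $\g$ is simple and $\mathfrak d$ is semisimple, a Goursat-type analysis of subalgebras of $\g\oplus\g$ shows that a $\mathfrak d$ complementary to $\Delta(\g)$ is, after possibly interchanging the two factors, one of: $\g\oplus0$; $0\oplus\g$; the graph of an automorphism $\tau$ of $\g$; or a subalgebra of $\mathfrak p_1\oplus\mathfrak p_2$ for some factorization $\g=\mathfrak p_1+\mathfrak p_2$ into proper semisimple subalgebras. The graph case cannot occur, since every automorphism of a compact simple Lie algebra fixes a nonzero vector --- it lies in the compact group $\mathrm{Aut}(\g)$, a finite-order automorphism fixes at least a Cartan subalgebra by Kac's classification, and an infinite-order one is centralised by a positive-dimensional torus of inner automorphisms and hence fixes a nonzero vector --- so its graph meets $\Delta(\g)$ nontrivially. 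In the factorization case the Goursat data would force each $\mathfrak p_i$ to have an ideal of a prescribed codimension and isomorphism type, and going through Onishchik's classification of factorizations of compact simple Lie algebras shows this is impossible. Ruling out this last case is the step I expect to be the main obstacle; the reduction of Theorem~\ref{comp} to the claim is comparatively routine.
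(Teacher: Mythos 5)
Your reduction is sound and in places cleaner than the paper's: introducing the second homomorphism $\phi(g)=gB(g)$ out of $G_\circ=(G,\circ)$, extracting semisimplicity of $\g_\circ$ from finiteness of $\pi_1(G_\circ)=\pi_1(G)$, and finishing with ``a homomorphism of connected Lie groups with vanishing differential at the identity is trivial'' (this replaces the paper's Lemma~\ref{zeroinv} and avoids its appeal to Baum--Browder). The passage to a complement $\mathfrak d$ of $\Delta(\g)$ in $\g\oplus\g$ is the standard equivalence with weight-$1$ Rota--Baxter operators, and your Goursat case list is exhaustive. The graph case can even be dispatched without fixed-point theory: if both projections of $\mathfrak d$ are bijective, then both $\mathcal B$ and $\mathcal B+\mathrm{id}$ are invertible, contradicting \cite[Corollary~2.22]{gubarev}, which forces an invertible weight-$1$ operator on $\g$ to be $-\mathrm{id}$. (Your sketch of the fixed-vector argument is slightly off --- a finite-order \emph{outer} automorphism does not fix a Cartan subalgebra of $\g$ pointwise --- though the conclusion $\g^\tau\neq0$ is correct.)

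The genuine gap is the factorization case, which you yourself flag as ``the main obstacle'': you assert that ``going through Onishchik's classification \dots shows this is impossible'' without carrying it out, and that is exactly where the content of the theorem sits. Everything before that point is formal --- a noncompact simple $\g$ satisfies all of it yet admits many Rota--Baxter operators via the Iwasawa decomposition --- so this case cannot be waved away on general grounds. The paper closes the gap in two different ways. In its main proof it never reaches a factorization: Scheerer's theorem says that homotopy-equivalent compact connected Lie groups are locally isomorphic, so $\g_\circ\cong\g$ is \emph{simple}, whence $\ker\mathcal B$, an ideal of $\g_\circ$, is zero or everything and $\mathcal B$ is either $0$ or invertible. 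This is a much stronger use of the topology than your $\pi_1$ argument, which only yields semisimplicity. In its second proof (Section~\ref{sec2}) it handles the factorization $\g=\B(\g)+\B'(\g)$ by induction on rank, using only the qualitative part of Onishchik's theorem (Lemma~\ref{decs}: the intersection is nonzero and one summand is compact simple of strictly smaller rank) and restricting the operator to that summand --- no case-by-case table check is required. To complete your argument you would need to import one of these two ideas, or actually perform the table verification you defer.
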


One cannot drop the condition on compactness from Theorem~\ref{comp}, since noncompact simple Lie groups have Rota--Baxter operators not of the two types above,
see the construction in~\cite[Example~2.8]{gls} using the Iwasawa decomposition.
Similarly, we cannot relax simplicity to semisimplicity, as shown by the example \( G = H \times H \) for any compact simple Lie group \( H \)
and \( B : G \to G \) defined by \( B(x, y) = x^{-1} \), see~\cite[Lemma~2.6]{gls}.

A Lie algebra is called \emph{compact} if it is a Lie algebra associated to a compact Lie group.
Recall that a simple Lie algebra is compact if and only if its Killing form is negative definite~\cite[Theorem~1.45 and Proposition~4.27]{knapp}.
For any Lie algebra \( \g \), the zero map \( 0 : \g \to \g \), defined by \( 0(u) = 0 \) for \( u \in \g \), and
the minus identity map \( -\mathrm{id} : \g \to \g \), defined by \( (-\mathrm{id})(u) = -u \) for \( u \in \g \),
are Rota--Baxter operators. It follows from Theorem~\ref{comp} and~\cite{sheng} that these are the only Rota--Baxter operators
on compact simple Lie algebras.

\begin{theorem}\label{alg}
	The only Rota--Baxter operators on a compact simple Lie algebra are \( 0 \) and \( -\mathrm{id} \).
\end{theorem}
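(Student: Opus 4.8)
The plan is to derive Theorem~\ref{alg} from Theorem~\ref{comp} by an integration argument. Let \( \g \) be a compact simple Lie algebra and let \( \B : \g \to \g \) be a Rota--Baxter operator of weight~1. First I would let \( G \) denote the connected, simply connected Lie group with Lie algebra \( \g \). Two structural facts are needed about \( G \). One is that \( G \) is compact: since \( \g \) is semisimple of compact type, Weyl's theorem guarantees that any connected Lie group with Lie algebra \( \g \) has finite fundamental group, so the simply connected \( G \) is compact. The other is that \( G \) is simple in the sense of this paper: it is connected by construction and nonabelian because \( \g \) is, while any closed connected normal subgroup of \( G \) has Lie algebra an ideal of the simple algebra \( \g \), hence equal to \( 0 \) or \( \g \), so the subgroup is trivial or all of \( G \).

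Next, I would apply the integration result \cite[Corollary~2.19]{sheng}, whose hypotheses --- \( G \) connected, simply connected and compact --- are exactly what the previous paragraph supplies. This produces a Rota--Baxter operator \( B : G \to G \) on the group with \( T_1 B = \B \). Since \( G \) is compact and simple, Theorem~\ref{comp} forces \( B = 1_G \) or \( B = (\cdot)^{-1} \).

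It then remains to identify the tangent maps at the identity. For the constant map we have \( T_1(1_G) = 0 \), and for the inversion map we have \( T_1\bigl((\cdot)^{-1}\bigr) = -\mathrm{id} \), since the differential of inversion at the identity of a Lie group is \( -\mathrm{id} \). Therefore \( \B = 0 \) or \( \B = -\mathrm{id} \). Conversely, both \( 0 \) and \( -\mathrm{id} \) satisfy the weight-1 Rota--Baxter identity, as already observed in the introduction, so these are precisely the Rota--Baxter operators on \( \g \).

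Given Theorem~\ref{comp} and the cited integration theorem, I do not expect a serious obstacle; the proof is a short reduction. The only points requiring care are checking that the connected simply connected group attached to a compact simple Lie algebra is itself compact and simple (so that both Theorem~\ref{comp} and \cite[Corollary~2.19]{sheng} apply), and recording the elementary computation \( T_1\bigl((\cdot)^{-1}\bigr) = -\mathrm{id} \). Were the integration result not available, one would instead have to analyze the weight-1 Rota--Baxter identity on \( \g \) directly --- but since earlier results may be assumed, this detour is unnecessary.
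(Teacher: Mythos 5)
Your proposal is correct and follows essentially the same route as the paper: integrate \( \B \) to the connected simply connected (compact, by Weyl's theorem) group \( G \) via \cite[Corollary~2.19]{sheng}, invoke Theorem~\ref{comp}, and read off the tangent maps of \( 1_G \) and \( (\cdot)^{-1} \). Your extra verification that \( G \) is simple is a reasonable point of care that the paper leaves implicit.
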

\begin{proof}
	Let \( \g \) be a compact simple Lie algebra with a Rota--Baxter operator \( \B : \g \to \g \).
	By Lie's third theorem~\cite[Theorem~1.14.3]{dkolk}, there exists a connected simply connected Lie group \( G \) with the Lie algebra~\( \g \).
	By Weyl's theorem~\cite[Theorem~4.69]{knapp}, it is compact, and hence by~\cite[Corollary~2.19]{sheng},
	there exists a Rota--Baxter operator \( B : G \to G \) such that \( \B = T_1 B \).
	By Theorem~\ref{comp}, we have \( B = 1_G \) or \( B = (\cdot)^{-1} \), and the claim follows by computing the tangent maps.
\end{proof}

In~\cite[Example~4.1]{sheng} it is shown that any Rota--Baxter operator on the Lie algebra \( \g = \mathfrak{so}(3) \)
can be integrated to a Rota--Baxter operator on the compact simple Lie group \( G = \mathrm{SU}(2) \). In view of Theorem~\ref{alg},
this is not surprising, since the only Rota--Baxter operators on \( \g \) are \( 0 \) and \( -\mathrm{id} \)
and they correspond to operators \( 1_G \) and \( (\cdot)^{-1} \) on~\( G \).

In Section~\ref{sec1} we give the proof of Theorem~\ref{comp}, which is essentially topological in nature and relies
on deep results of Scheerer, Baum and Browder~\cite{scheer, baum} on homotopy of simple Lie groups.
In Section~\ref{sec2} we give a second proof of Theorem~\ref{alg} which does not rely on Theorem~\ref{comp}
or~\cite{sheng}, and instead uses the classification of decompositions of compact simple Lie algebras into a sum of subalgebras by Onishchik~\cite{onish}.
Although this second proof looks purely algebraic on the surface, the results of Onishchik also rely on topological properties of Lie groups.

\section{Proof of Theorem~\ref{comp}}\label{sec1}

We start with preliminary results on Lie groups and Rota--Baxter operators.
Given two Lie groups \( G \) and \( H \), we say that \( f : G \to H \) is a \emph{Lie group homomorphism} if
it is a smooth homomorphism between groups \( G \) and~\( H \).

\begin{lemma}[{\cite[Theorem~3.20]{kirillov}}]\label{lie}
	Let \( G \) and \( H \) be Lie groups with Lie algebras \( \g \) and \( \mathfrak{h} \), respectively.
	Assume \( G \) to be connected.
	If \( f, k : G \to H \) are Lie group homomorphisms and \( T_1 f = T_1 k \), then \( f = k \).
\end{lemma}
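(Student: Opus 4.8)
The plan is to reduce this global uniqueness statement to a local one, using the exponential map together with the fact that a connected Lie group is generated by every neighbourhood of its identity. The key tool is the naturality of the exponential map: for any Lie group homomorphism \( f : G \to H \) and any \( X \in \g \), the curve \( \gamma(t) = f(\exp_G(tX)) \) is a one-parameter subgroup of \( H \), and differentiating at \( t = 0 \) via the chain rule gives \( \gamma'(0) = T_1 f(X) \). Since one-parameter subgroups of \( H \) are integral curves of left-invariant vector fields, they are uniquely determined by their initial velocity, so \( \gamma(t) = \exp_H(t\, T_1 f(X)) \); taking \( t = 1 \) yields \( f(\exp_G(X)) = \exp_H(T_1 f(X)) \) for all \( X \in \g \).

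Applying this identity to both \( f \) and \( k \) and invoking the hypothesis \( T_1 f = T_1 k \), I get \( f(\exp_G(X)) = \exp_H(T_1 f(X)) = \exp_H(T_1 k(X)) = k(\exp_G(X)) \) for every \( X \in \g \). Thus \( f \) and \( k \) agree on the subset \( \exp_G(\g) \subseteq G \).

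It remains to propagate this agreement to all of \( G \). Since the differential of \( \exp_G \) at \( 0 \in \g \) is the identity map, \( \exp_G \) restricts to a diffeomorphism from a neighbourhood of \( 0 \) onto a neighbourhood \( U \) of the identity in \( G \), so in particular \( U \subseteq \exp_G(\g) \). The equalizer \( E = \{\, g \in G : f(g) = k(g) \,\} \) is a subgroup of \( G \) because \( f \) and \( k \) are homomorphisms, and it contains \( U \), so \( E \) is an open subgroup. An open subgroup of a topological group is also closed, its complement being a union of (open) cosets, so by connectedness of \( G \) we conclude \( E = G \), that is, \( f = k \).

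The only substantive ingredient is the naturality of \( \exp \) and the uniqueness theorem for one-parameter subgroups; the remainder is the routine fact that an open subgroup of a connected group is the whole group. It is worth recording where smoothness is used: it is precisely what allows one to form \( T_1 f \) and to run the one-parameter subgroup argument, and indeed the conclusion is false for arbitrary abstract group homomorphisms, so this is the genuine obstacle rather than a formality.
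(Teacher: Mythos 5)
Your proof is correct: the naturality identity \( f(\exp_G(X)) = \exp_H(T_1 f(X)) \) via uniqueness of one-parameter subgroups, followed by the observation that the equalizer of \( f \) and \( k \) is an open (hence closed) subgroup of the connected group \( G \), is exactly the standard argument. The paper does not prove this lemma itself but cites it to Kirillov's textbook (Theorem~3.20), and your write-up reproduces that textbook proof faithfully, so there is nothing to compare beyond noting the agreement.
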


For a Lie group \( G \) with the associated Lie algebra \( \g \) we will write \( (G, \cdot) \)
and \( (\g, [\cdot, \cdot]) \) to indicate the group multiplication and the Lie bracket, when these are not clear from the context.

\begin{lemma}[{\cite[Proposition~2.13]{gls}}]\label{mul}
	Let \( (G, \cdot) \) be a Lie group with the Lie algebra \( (\g, [\cdot, \cdot]) \),
	and let \( B : G \to G \) be a Rota-Baxter operator on~\( G \) with the tangent map \( \B = T_1 B : \g \to \g \).
	Define \( * : G \times G \to G \) by \( g * h = gB(g)hB(g)^{-1} \) for all \( g, h \in G \).
	Then \( (G, *) \) is a Lie group with the Lie algebra \( (\g, [\cdot, \cdot]_{\B}) \), where
	\[ [u, v]_{\B} = [\B(u), v] + [u, \B(v)] + [u, v] \text{ for all } u, v \in \g. \]
	The map \( B \) is a Lie group homomorphism from \( (G, *) \) to \( (G, \cdot) \),
	while \( \B \) is a Lie algebra homomorphism from \( (\g, [\cdot, \cdot]_{\B}) \) to \( (\g, [\cdot, \cdot]) \).
\end{lemma}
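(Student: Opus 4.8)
The plan is to establish the group axioms for $(G, *)$ directly, relying on the Rota--Baxter identity rewritten as $B(g * h) = B(g)B(h)$, then to pin down the Lie algebra of $(G, *)$ by a second-order expansion of $*$, and finally to read off the homomorphism statements. First I would note that putting $g = h = 1$ in the defining equation gives $B(1)^2 = B(1)$, so $B(1) = 1$. Since $g * h = gB(g)hB(g)^{-1}$ is exactly the argument on the right-hand side of the Rota--Baxter equation, that equation becomes $B(g * h) = B(g)B(h)$, and I would use this repeatedly. Associativity follows by expanding both $(g * h) * k$ and $g * (h * k)$ and substituting $B(g * h) = B(g)B(h)$; both collapse to $gB(g)\,h\,B(h)\,k\,B(h)^{-1}B(g)^{-1}$. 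The element $1$ is a two-sided identity because $B(1) = 1$, and one checks that $B(g)^{-1}g^{-1}B(g)$ is a two-sided inverse of $g$. As $B$ and the operations of $(G, \cdot)$ are smooth, both $*$ and $g \mapsto B(g)^{-1}g^{-1}B(g)$ are smooth, and since the underlying manifold is unchanged, $(G, *)$ is a Lie group.

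The homomorphism assertion for $B$ is then immediate: $B(g * h) = B(g)B(h)$ says exactly that the smooth map $B$ is a homomorphism $(G, *) \to (G, \cdot)$. The groups $(G, *)$ and $(G, \cdot)$ have the same underlying manifold and the same identity $1$, so their Lie algebras share the vector space $T_1 G = \g$ and differ only in the bracket; write $[\cdot, \cdot]_*$ for the bracket of $(G, *)$. It remains to show $[\cdot, \cdot]_* = [\cdot, \cdot]_\B$.

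This computation of $[\cdot, \cdot]_*$ is the step I expect to be the main obstacle. I would work in the exponential coordinates of $(G, \cdot)$ and write $B(\exp u) = \exp \beta(u)$ with $\beta(0) = 0$ and $D\beta(0) = \B$, the latter because $T_1 B = \B$ and $\exp$ has identity differential at $0$. Expanding
\[ \exp(u) * \exp(v) = \exp(u)\,\exp(\beta(u))\,\exp(v)\,\exp(-\beta(u)) \]
to second order, the conjugation factor contributes $\exp(\beta(u))\exp(v)\exp(-\beta(u)) = \exp\bigl(v + [\B(u), v] + \cdots\bigr)$ through $\mathrm{Ad}_{\exp\beta(u)} = e^{\mathrm{ad}_{\beta(u)}}$, and one further application of the Baker--Campbell--Hausdorff formula gives
\[ \exp(u) * \exp(v) = \exp\Bigl(u + v + [\B(u), v] + \tfrac{1}{2}[u,v] + \cdots\Bigr), \]
where the omitted terms have order at least three in $(u, v)$. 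The bilinear part of the group law is thus $\mu(u, v) = [\B(u), v] + \tfrac{1}{2}[u,v]$, and since the bracket is its antisymmetrization $\mu(u,v) - \mu(v,u)$, it equals $[\B(u), v] + [u, \B(v)] + [u, v] = [u, v]_\B$. The delicate points are controlling the orders in the iterated Baker--Campbell--Hausdorff expansion and justifying that the antisymmetrized quadratic term yields the bracket in these coordinates, which are not canonical for $*$; for the latter I would invoke the standard fact that in any chart centered at the identity with identity differential there, the bracket is the antisymmetric bilinear part of the multiplication.

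With the Lie algebra of $(G, *)$ identified as $(\g, [\cdot, \cdot]_\B)$, the final claim is automatic: the tangent map at the identity of the Lie group homomorphism $B : (G, *) \to (G, \cdot)$ is a Lie algebra homomorphism $(\g, [\cdot, \cdot]_\B) \to (\g, [\cdot, \cdot])$, and this tangent map is $\B$. As a consistency check, the resulting relation $\B([u,v]_\B) = [\B(u), \B(v)]$ is precisely the weight-$1$ Rota--Baxter identity for $\B$.
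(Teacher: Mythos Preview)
The paper does not prove this lemma; it is quoted verbatim from \cite[Proposition~2.13]{gls} and used as a black box. Your argument is a correct reconstruction of the proof: the group axioms for $(G,*)$ via the rewritten identity $B(g*h)=B(g)B(h)$, and the bracket computation via a second-order BCH expansion in the exponential chart of $(G,\cdot)$, are exactly the standard route. Your handling of the one genuine subtlety --- that $\exp$ for $(G,\cdot)$ is not the exponential of $(G,*)$, so one must know that the bracket is the antisymmetrized bilinear part of the product in \emph{any} chart with identity differential at $1$ --- is correctly flagged and resolved.
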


Given a Lie group \( G \) with the associated Lie algebra \( \g \), recall that the trivial \( 1_G : G \to G \)
and inverse \( (\cdot)^{-1} : G \to G \) maps are Rota--Baxter operators on \( G \) with tangent maps equal to \( 0 : \g \to \g \) and \( -\mathrm{id} : \g \to \g \),
respectively, see~\cite[Example~2.3]{gls}. Turns out the converse is true for compact simple Lie groups.

\begin{lemma}\label{zeroinv}
	Let \( G \) be a compact simple Lie group with the associated Lie algebra~\( \g \).
	Let \( B : G \to G \) be a Rota--Baxter operator on \( G \), and let \( \B = T_1 B : \g \to \g \) be the tangent map.
	If \( \B = 0 \), then \( B = 1_G \), and if \( \B = -\mathrm{id} \), then \( B = (\cdot)^{-1} \).
\end{lemma}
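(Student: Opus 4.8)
The plan is to use Lemma~\ref{mul}, which presents $B$ as a Lie group homomorphism from $(G,*)$ to $(G,\cdot)$, where $g*h=gB(g)hB(g)^{-1}$ and $(G,*)$ is a Lie group on the manifold~$G$, hence connected and compact. Putting $g=h=1$ in the Rota--Baxter identity gives $B(1)^2=B(1)$, so $B(1)=1$; thus $1$ is also the identity of $(G,*)$, and $T_1B$ is a map between the Lie algebras of the two groups. In each case the idea is to exhibit a second Lie group homomorphism with the same tangent map as~$B$ and conclude via Lemma~\ref{lie}, using connectedness of the relevant source group. The case $\B=0$ is then immediate: the constant map $1_G$ is a Lie group homomorphism from $(G,*)$ to $(G,\cdot)$ with tangent map $0=\B=T_1B$, and $(G,*)$ is connected, so $B=1_G$.

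Now suppose $\B=-\mathrm{id}$; the first step is to prove that $B$ is a diffeomorphism. Since $T_1B=-\mathrm{id}$ is invertible and $B$ is a homomorphism, $B$ is a local diffeomorphism everywhere, so its image is an open, hence full, subgroup of the connected group $(G,\cdot)$, and its kernel is discrete and closed in the compact group $(G,*)$, hence finite. Therefore $B$ is a finite covering homomorphism. As $G$ is compact with simple, hence semisimple, Lie algebra, its fundamental group $\pi_1(G)$ is finite by Weyl's theorem; since $(G,*)$ and $(G,\cdot)$ share the underlying manifold~$G$, they have equal fundamental groups. The covering degree of $B$ equals the index $[\pi_1((G,\cdot)):B_*\pi_1((G,*))]$, and since $B_*$ is injective this index is $|\pi_1((G,\cdot))|/|\pi_1((G,*))|=1$; hence $B$ is bijective and so a Lie group isomorphism.

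It remains to identify~$B$. Let $\phi=B^{-1}$, a Lie group isomorphism from $(G,\cdot)$ to $(G,*)$ with $\phi(1)=1$ and $T_1\phi=-\mathrm{id}$. Substituting $g=\phi(a)$ and $h=\phi(b)$ into the identity $B(g)B(h)=B(gB(g)hB(g)^{-1})$ and applying~$\phi$ yields $\phi(ab)=\phi(a)\,a\,\phi(b)\,a^{-1}$ for all $a,b\in G$. Define $\chi:G\to G$ by $\chi(a)=\phi(a)\,a$; then $\chi(ab)=\phi(a)\,a\,\phi(b)\,a^{-1}ab=\phi(a)\,a\,\phi(b)\,b=\chi(a)\chi(b)$, so $\chi$ is a Lie group endomorphism of $(G,\cdot)$ with tangent map at~$1$ equal to $T_1\phi+\mathrm{id}=-\mathrm{id}+\mathrm{id}=0$. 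Comparing $\chi$ with $1_G$ via Lemma~\ref{lie} (using connectedness of~$G$) gives $\chi=1_G$, that is $\phi(a)=a^{-1}$ for all $a$, whence $B=\phi^{-1}=(\cdot)^{-1}$.

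I expect the main obstacle to be the step showing that $B$ is a diffeomorphism when $\B=-\mathrm{id}$: the manipulations with the Rota--Baxter identity and the applications of Lemma~\ref{lie} are formal, but ruling out a nontrivial kernel of~$B$ requires genuine topological input, namely finiteness of the fundamental group of a compact simple Lie group together with the behaviour of covering degrees. One should also make sure that $(G,*)$ carries the smooth manifold structure of~$G$, so that this comparison of fundamental groups is justified; this is part of the content of Lemma~\ref{mul}.
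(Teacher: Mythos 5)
Your proof is correct, but it takes a genuinely different route from the paper in both halves. For \( \B = 0 \), you apply Lemma~\ref{lie} directly to the two homomorphisms \( B, 1_G : (G,*) \to (G,\cdot) \) out of the connected group \( (G,*) \) (after checking \( B(1)=1 \)); the paper instead first invokes the Baum--Browder theorem that compact simple Lie groups of the same homotopy type are isomorphic, producing an isomorphism \( f : (G,\cdot) \to (G,*) \), and then compares \( B \circ f \) with \( 1_G \) as endomorphisms of \( (G,\cdot) \). Your version needs neither compactness nor simplicity for this half --- only connectedness --- which is consistent with (and strengthens) the paper's remark after the lemma. For \( \B = -\mathrm{id} \), the paper reduces to the first case by means of the auxiliary Rota--Baxter operator \( \tilde{B}(g) = g^{-1}B(g^{-1}) \) from \cite[Proposition~2.4]{gls}, whose tangent map is zero; you instead show directly that \( B \) is a Lie group isomorphism via a covering-space argument (local diffeomorphism by translation equivariance, open image, finite kernel, and degree one because \( \pi_1(G) \) is finite by Weyl's theorem and \( B_* \) is injective), and then identify \( B \) as inversion through the identity \( \phi(ab) = \phi(a)\,a\,\phi(b)\,a^{-1} \) and the auxiliary endomorphism \( \chi(a) = \phi(a)a \), again finishing with Lemma~\ref{lie}. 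The trade-off: your argument replaces the deep Baum--Browder input by elementary covering theory plus Weyl's finiteness of the fundamental group (this is where compactness enters for you), at the cost of a longer second case; the paper's second case is a two-line reduction but its first case rests on the heavier topological theorem. Both uses of Lemma~\ref{lie} and your algebraic manipulations check out, including the verification that \( 1 \) is the identity of \( (G,*) \) and that \( T_1\chi = 0 \).
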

\begin{proof}
	Let \( B \) be a Rota--Baxter operator with \( \B = 0 \). By Lemma~\ref{mul}, there exists another multiplication
	\( * : G \times G \to G \) such that the associated Lie algebra of the Lie group \( (G, *) \)
	is \( (\g, [\cdot, \cdot]_{\B}) \), where
	\[ [u, v]_{\B} = [\B(u), v] + [u, \B(v)] + [u, v] \text{ for all } u, v \in \g. \]
	Since \( \B(u) = \B(v) = 0 \) in our case, we yield \( [u, v]_{\B} = [u, v] \) for all \( u, v \in \g \).
	Therefore Lie algebras \( (\g, [\cdot,\cdot]) \) and \( (\g, [\cdot,\cdot]_{\B}) \) are isomorphic.

	Lie groups \( (G, \cdot) \) and \( (G, *) \) have the same underlying manifolds, in particular, they have the same homotopy type
	and are both connected and compact.
	Since \( (G, \cdot) \) is a simple Lie group, its Lie algebra \( (\g, [\cdot, \cdot]) \) is simple~\cite[Theorem~1.11.7]{dkolk}.
	Hence the algebra \( (\g, [\cdot, \cdot]_{\B}) \) is also simple, and by~\cite[Theorem~1.11.7]{dkolk} the group \( (G, *) \) is simple.
	By~\cite[Theorem~9.3]{baum}, two compact simple Lie groups of the same homotopy type are isomorphic (as Lie groups),
	hence there exists a smooth group isomorphism \( f : (G, \cdot) \to (G, *) \).

	By Proposition~\ref{mul}, the map \( B \) is a Lie group homomorphism from \( (G, *) \) to \( (G, \cdot) \).
	Hence the composition \( B \circ f : (G, \cdot) \to (G, \cdot) \) is a Lie group homomorphism from \( (G, \cdot) \) into itself.
	Consider its tangent at the identity:
	\[ T_1 (B \circ f) = T_{f(1)} B \circ T_1 f = T_1 B \circ T_1 f = 0, \]
	where \( f(1) = 1 \) since \( f \) is a homomorphism, and \( T_1 B = 0 \) by our assumption.
	The trivial map \( 1_G : (G, \cdot) \to (G, \cdot) \) also has a zero tangent, hence by Lemma~\ref{lie},
	we have \( B \circ f = 1_G \). Since \( f \) is an isomorphism, we derive \( B = 1_G \), as claimed.

	Now, if \( B \) is a Rota-Baxter operator with \( \B = -\mathrm{id} \), then by~\cite[Proposition~2.4]{gls},
	a smooth map \( \tilde{B}(g) = g^{-1} B(g^{-1}) \), \( g \in G \), is also a Rota--Baxter operator on \( G \).
	Let \( \exp : \g \to G \) denote the exponential map. For \( u \in \g \) we have
	\begin{multline*}
	(T_1 \tilde{B})(u) = \left.\frac{d}{dt}\right|_{t=0} \tilde{B}(\exp(tu)) = \left.\frac{d}{dt}\right|_{t=0} \exp(-tu)B(\exp(-tu)) =\\
		= -u + \left.\frac{d}{dt}\right|_{t=0} B(\exp(-tu)) = -u + (T_1 B)(-u) = -u + u = 0.
	\end{multline*}
	Therefore \( T_1 \tilde{B} = 0 \), so by the preceding paragraphs, \( \tilde{B}(g) = 1 \) for all \( g \in G \).
	This immediately implies \( B(g) = g^{-1} \), \( g \in G \), as claimed.
\end{proof}

We note that an analogue of Lemma~\ref{zeroinv} can be proved for connected simply connected Lie groups without assuming compactness or simplicity,
if one uses Lie's second theorem~\cite[Theorem~3.14]{kirillov} to obtain the isomorphism \( f \) in the proof.
\medskip

\noindent\emph{Proof of Theorem~\ref{comp}.}
Let \( B : G \to G \) be a Rota--Baxter operator on~\( G \). Let \( \mathfrak{g} \) be the Lie algebra of~\( G \).
By~\cite[Theorem~2.9]{gls}, the tangent map \( \mathfrak{B} : \mathfrak{g} \to \mathfrak{g} \) to \( B \) at the identity
is a Rota--Baxter operator of weight~1 on the Lie algebra \( \mathfrak{g} \). By Proposition~\ref{mul}, there is
a new smooth multiplication \( * : G \times G \to G \) such that \( G_B = (G, *) \) is a Lie group,
and a new Lie bracket \( [\cdot, \cdot]_{\mathfrak{B}} : \mathfrak{g} \times \mathfrak{g} \to \mathfrak{g} \)
such that \( \mathfrak{g}_{\mathfrak{B}} = (\mathfrak{g}, [\cdot, \cdot]_{\mathfrak{B}}) \) is a Lie algebra corresponding to the Lie group \( G_B \).
Moreover, \( \mathfrak{B} : \mathfrak{g}_{\mathfrak{B}} \to \mathfrak{g} \) is a homomorphism of Lie algebras.

If \( \mathfrak{B} \) is the zero map, then \( B(g) = 1 \) for all \( g \in G \) by Lemma~\ref{zeroinv}, and we are done.
If \( \mathfrak{B} \) is an invertible linear map, then by~\cite[Corollary~2.22]{gubarev}, \( \mathfrak{B} = -\mathrm{id} \).
Therefore \( B(g) = g^{-1} \) for all \( g \in G \) by Lemma~\ref{zeroinv}, and we are done again.
Hence we may assume that \( \mathfrak{B} \) has a nontrivial kernel, in particular, \( \mathfrak{g}_{\mathfrak{B}} \) is not a simple Lie algebra.

Note that \( G_B \) is a compact connected Lie group which is homotopy equivalent to \( G \),
since it has the same underlying manifold as~\( G \). By~\cite{scheer}, \( G_B \) and \( G \) are locally isomorphic,
in particular, their Lie algebras are isomorphic. The Lie algebra \( \mathfrak{g}_{\mathfrak{B}} \) is not simple,
while \( \g \) is simple, a contradiction. We described all possibilities for \( B \) and the proof is complete.
\qed

\section{Another proof of Theorem~\ref{alg}}\label{sec2}

We give a proof of Theorem~\ref{alg} which does not involve Rota--Baxter operators on Lie groups.
We start with the result of Onishchik on decompositions of compact simple Lie algebras.

\begin{lemma}[{\cite[Theorem~4.1 and Table~7]{onish}}]\label{decs}
	Let \( \g \) be a compact simple Lie algebra, and let \( \g_1, \g_2 \) be proper nonzero subalgebras of \( \g \) such that \( \g = \g_1 + \g_2 \).
	Then \( \g_1 \cap \g_2 \) is nonzero and at least one of the summands \( \g_i \), \( i = 1, 2 \),
	is a compact simple Lie algebra of rank smaller than the rank of \( \g \).
\end{lemma}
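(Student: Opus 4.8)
The plan is to translate the Lie-algebra decomposition into a \emph{factorization} of a compact Lie group and then read off the conclusion from rational invariants of compact homogeneous spaces. Fix a compact connected Lie group $G$ with Lie algebra $\g$, and let $G_1, G_2$ be the connected Lie subgroups with Lie algebras $\g_1, \g_2$. Since a subalgebra of a compact Lie algebra is again compact, each splits as $\g_i = \mathfrak{z}_i \oplus \mathfrak{s}_i$ with $\mathfrak{z}_i$ abelian and $\mathfrak{s}_i$ semisimple. The hypothesis $\g = \g_1 + \g_2$ makes the multiplication map $G_1 \times G_2 \to G$ a submersion at the identity, so its image $G_1 G_2$ contains a neighbourhood of $1$; as $G_1, G_2$ are compact (closed subgroups of the compact $G$), $G_1 G_2$ is compact, hence open and closed, and therefore equals the connected group $G$. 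Thus $G_1$ acts transitively on $G/G_2$, yielding a diffeomorphism $G/G_2 \cong G_1/(G_1 \cap G_2)$. The mild technical point that the $G_i$ are closed (so that they are genuine subgroups) I would handle by recalling that semisimple subalgebras integrate to closed subgroups and treating the central torus parts separately.

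The main tool I would invoke is the multiplicativity of Poincaré polynomials under a factorization, $P_{\g}(t)\,P_{\g_1 \cap \g_2}(t) = P_{\g_1}(t)\,P_{\g_2}(t)$, where for a compact algebra $\mathfrak{k}$ one has $P_{\mathfrak{k}}(t) = \prod_i (1 + t^{2m_i+1})$ with $m_i$ the exponents, so that $P_{\mathfrak{k}}(1) = 2^{\operatorname{rank}\mathfrak{k}}$. I would also use that $\dim H^1 = \dim(\text{center})$, that for a semisimple compact algebra $\dim H^3$ equals its number of simple factors, and the Euler-characteristic criterion $\chi(G/H) = 0$ when $\operatorname{rank} H < \operatorname{rank} G$, while $\chi(G/H) = |W_G|/|W_H| > 0$ when the ranks agree.

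For the nonzero intersection I would argue by contradiction. If $\g_1 \cap \g_2 = 0$, then $P_{\g} = P_{\g_1} P_{\g_2}$. Comparing the coefficients of $t^1$ and using $H^1(\g) = 0$ (as $\g$ is simple) forces $\mathfrak{z}_1 = \mathfrak{z}_2 = 0$, so both summands are semisimple and, being nonzero, have at least one simple factor each. Comparing the coefficients of $t^3$ then gives $1 = \dim H^3(\g) = (\#\text{simple factors of }\g_1) + (\#\text{simple factors of }\g_2) \ge 2$, a contradiction. Hence $\g_1 \cap \g_2 \ne 0$.

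The rank and simplicity statement is where the real work lies. Evaluating the multiplicativity relation at $t = 1$ gives the rank identity $r + \operatorname{rank}(\g_1 \cap \g_2) = \operatorname{rank}\g_1 + \operatorname{rank}\g_2$, where $r = \operatorname{rank}\g$; were both summands of full rank $r$, this would force $\g_1 \cap \g_2$ to have full rank, and the Euler identity $|W_G|/|W_{G_2}| = \chi(G/G_2) = \chi(G_1/(G_1\cap G_2)) = |W_{G_1}|/|W_{G_1\cap G_2}|$, fed into the Poincaré relation, then constrains the exponents sharply. The genuinely hard point is that the surviving assertion — that one summand is actually a compact \emph{simple} algebra of rank $< r$ — does not come from a short invariant-theoretic argument: with $P_{\g} = \prod_i(1+t^{2m_i+1})$ for \emph{simple} $\g$, the identity $P_{\g} P_{\g_1\cap\g_2} = P_{\g_1} P_{\g_2}$ becomes a system of equations on multisets of exponents whose full solution is precisely Onishchik's case-by-case enumeration (culminating in Table~7), itself bound up with the classification of transitive actions of compact Lie groups, notably on spheres. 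I expect this enumeration to be the main obstacle; the factorization $\mathfrak{so}(7) = \mathfrak{g}_2 + \mathfrak{so}(6)$, whose simple smaller-rank summand is $\mathfrak{g}_2$, is the prototypical case any general argument must reproduce.
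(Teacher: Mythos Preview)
The paper does not prove this lemma; it is quoted verbatim from Onishchik's classification (Theorem~4.1 and Table~7 of \cite{onish}) and used as a black box in the second proof of Theorem~\ref{alg}. So there is no ``paper's own proof'' to compare against.

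What you have written is in fact a reasonable outline of the method Onishchik himself uses: pass from the decomposition $\g=\g_1+\g_2$ to a factorization $G=G_1G_2$ of a compact group, exploit the resulting diffeomorphism $G/G_2\cong G_1/(G_1\cap G_2)$, and read off constraints from the multiplicativity of rational Poincar\'e polynomials. Your argument that $\g_1\cap\g_2\neq 0$ via the $t^1$ and $t^3$ coefficients is clean and correct, and the rank identity from evaluation at $t=1$ is the right first step toward the rank inequality. You are also honest about the real obstacle: the assertion that one summand is \emph{simple} of strictly smaller rank is not a formal consequence of the polynomial identity but requires Onishchik's enumeration of solutions, tied to the classification of transitive actions on compact homogeneous spaces. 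So your proposal is not a self-contained proof but an annotated reduction to exactly the reference the paper cites.

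One technical point deserves more care than you give it: the connected subgroups $G_i$ integrating $\g_i$ need not be closed when $\g_i$ has a nontrivial center (an irrational line in a maximal torus is the standard obstruction), and ``treating the central torus parts separately'' is vaguer than it should be. The clean fix is to replace $G_i$ by its closure $\overline{G_i}$, whose Lie algebra $\overline{\g_i}\supseteq\g_i$ is still proper (since $\g_i$ is proper and the closure only enlarges the center), and run the argument for the closed subgroups; the conclusions then descend back to $\g_i$. Also, the identity $P_\g\,P_{\g_1\cap\g_2}=P_{\g_1}\,P_{\g_2}$ is not automatic from the fibration alone---it needs the rational degeneration of the Serre spectral sequence for $H\to G\to G/H$ with $G$ compact connected, which is true but should be cited rather than asserted.
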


The lemma implies that there are no decompositions into a sum of proper subalgebras for rank~1 compact simple Lie algebras.
\medskip

\noindent
\emph{Another proof of Theorem~\ref{alg}.}
We prove the claim by induction on the rank of~\( \g \).
Let \( \B : \g \to \g \) be a Rota--Baxter operator of weight~\( 1 \) on~\( \g \). Suppose that \( \B \neq 0, -\mathrm{id} \); we want to derive a contradiction.
By~\cite[Corollary~2.22]{gubarev}, the operator \( \B \) is not invertible, hence its image \( \g_1 = \B(\g) \) is a proper nonzero subalgebra of \( \g \).
Recall that \( \B' : \g \to \g \) defined by \( \B'(u) = -u - \B(u) \) is also a Rota--Baxter operator on \( \g \), see~\cite[Lemma~2.2]{gubarev}, and
we may similarly assume that the image \( \g_2 = \B'(\g) \) is a proper nonzero subalgebra of \( \g \).

Since \( -u = \B(u) + \B'(u) \), we have \( \g = \g_1 + \g_2 \). By Lemma~\ref{decs}, the intersection \( \g_1 \cap \g_2 \) is nonzero
and, without loss of generality, we may assume that \( \g_1 \) is a compact simple Lie algebra of rank smaller than the rank of~\( \g \).

If \( \g \) has rank~1, then there are no decompositions of \( \g \) into a sum of two proper subalgebras, a contradiction.
We may assume that \( \g \) has rank at least~\( 2 \).
Clearly the restriction \( \B|_{\g_1} \) of \( \B \) on \( \g_1 \) is a Rota--Baxter operator of weight~\( 1 \) on \( \g_1 \), and by induction hypothesis \( \B|_{\g_1} \)
is either \( 0 \) or \( -\mathrm{id} \). Suppose that \( \B|_{\g_1} = 0 \). For \( u \in \g_1 \) we have \( \B'(u) = -u - \B(u) = -u \),
so \( \g_1 \leq \B'(\g) = \g_2 \). Hence, \( \g = \g_1 + \g_2 = \g_2 \), a contradiction.

Now assume that \( \B|_{\g_1} = -\mathrm{id} \). For \( u \in \g_1 \) we have \( -u = \B(u) + \B'(u) = -u + \B'(u) \), thus \( \B'(u) = 0 \).
Therefore \( \g_1 \leq \mathrm{Ker} \B' \), since \( u \in \g_1 \) was arbitrary. Hence \( \dim \g_1 \leq \dim \g - \dim \g_2 \)
and \( \dim \g_1 + \dim \g_2 \leq \dim \g \). On the other hand, since \( \g = \g_1 + \g_2 \), we have \( \dim \g \leq \dim \g_1 + \dim \g_2 \),
so \( \dim \g = \dim \g_1 + \dim \g_2 \). This implies \( \g_1 \cap \g_2 = 0 \), a contradiction. \qed
\medskip

Note that Theorem~\ref{alg} together with Lemma~\ref{zeroinv} give an alternative proof of Theorem~\ref{comp}
which does not rely on the result of Scheerer~\cite{scheer}.

\section*{Acknowledgements}

The author expresses his gratitude to A.A.~Galt, M.E.~Goncharov and V.Yu.~Gu\-barev for many stimulating discussions on the topic.

The research was carried out within the framework of the Sobolev Institute of Mathematics state contract. 

\bigskip

\noindent
\emph{Saveliy V. Skresanov}

\noindent
\emph{Sobolev Institute of Mathematics,}

\noindent
\emph{4 Acad. Koptyug avenue, 630090 Novosibirsk, Russia}

\noindent
\emph{Email address: skresan@math.nsc.ru}

\end{document}